\newtheorem{theorem}{Theorem}
\newtheorem{proposition}{Proposition}
\newtheorem{remark}{Remark}
\newtheorem{lemma}{Lemma}
\newcommand{\Or}{\mathcal{O}}
\newcommand{\Pb}{\mathbbm{P}}
\newcommand{\Id}{\mathbbm{1}}
\newcommand{\e}{\varepsilon}
\newcommand{\I}{\mathrm{i}}
\newcommand{\D}{\mathrm{d}}
\newcommand{\C}{\mathbb{C}}
\newcommand{\R}{\mathbb{R}}
\newcommand{\N}{\mathbb{N}}
\newcommand{\Z}{\mathbb{Z}}
\renewcommand{\Re}{\mathrm{Re}}
\renewcommand{\phi}{\varphi}
\renewcommand{\textrm}{\textnormal}
\begin{document}
\title{Upper tail large deviations for Brownian motions with one-sided collisions} 
\author{Thomas~Weiss\footnote{Acoustics Research Institute, Austrian Academy of Sciences, Vienna. {thomas.weiss@oeaw.ac.at}.}
    }
\maketitle
\begin{abstract}
    The system of interacting Brownian motions, where a particle is reflected asymmetrically from its left neighbor, belongs to the KPZ universality class, with multi-point asymptotics having been derived in previous works. In this paper we show upper tail large deviation principles for all three fundamental initial conditions, including explicit calculation of the rate function. For the periodic case the Lambert-W function, which is already present in the Fredholm determinant formula, also appears in the rate function.
\end{abstract}

\pagestyle{fancy}
\fancyhead{}
\fancyhead[CE,CO]{Large deviations for Brownian motions with one-sided collisions}

\section{Introduction}

At the beginning of the growing interest in the famous KPZ equation around the millenium, explicit solutions or limit results of this equation were still out of reach. 
The focus of the scientific community thus shifted onto integrable models in the \emph{KPZ universality class}, which are models that share the characteristic scaling exponents and limiting processes with the KPZ equation, with the TASEP \cite{Jo00b} and the PNG model being two of the first models where limit results were proven \cite{PS02}.

After the TASEP, the model of reflected Brownian motions studied here is the one where asymptotics have been worked out in the greatest comprehensiveness. 
Multi-point scaling limits are established for all six fundamental KPZ initial profiles \cite{WFS17}, and it was also the first model where the two-time distribution has been established \cite{Joh17}.

This model consists of a set Brownian motions $x_n(t)$ with $n\in\N$ or $n\in\Z$, where the particle $x_n$ obeys a one-sided reflection at the particle $x_{n-1}$ in the positive direction. 
The ordering relation $x_n\geq x_{n-1}$ has to be satisfied by the initial condition and persists for all $t>0$ by the reflection.

The system $\{x^\textrm{packed}_n(t),n\in\N\}$ is subject to the initial condition $x_n^\textrm{packed}(0)=0$, and is the simplest one to study. 
Well-definedness is evident e.g. using the Skorokhod construction of the reflection, recognizing that $x_n$ is a deterministic function of only finitely many driving Brownian motions.

The second system $\{x_n^\textrm{flat}(t),n\in\Z\}$ is given by the periodic initial condition $x^\textrm{flat}_n(0)=n$. 
It is computationally more involved and, quite surprisingly, the Lambert W function appears in the contour integral formula for the kernel~\cite{FSW13}.

Finally, depending on a parameter $\rho>0$, we define the system $\{x_n^{\textrm{stat},\rho}(t),n\in\Z\}$ satisfying the random initial condition $x_0^{\textrm{stat},\rho}(0)=0$ and $x_n^{\textrm{stat},\rho}(0) - x_{n-1}^{\textrm{stat},\rho}(0) = \zeta_n$, where $\zeta_n$ are independent  random variables distributed as $\zeta_n\sim\mathrm{Exp}(1)$ for $n\geq1$ and  $\zeta_n\sim\mathrm{Exp}(\rho)$ for $n\leq0$. The system $\{x_n^\textrm{stat}(t)\}:=\{x_n^{\textrm{stat},1}(t)\}$ is stationary in space and time \cite[Proposition~3.4]{FSW15}.

For the last two systems, well-definedness is not a priori clear as for the packed case. In fact, systems indexed by $\Z$ need the requirement of the initial conditions being spaced not too close together \cite[Section~2.3]{WFS17}.

This paper derives large deviations principles with explicit rate functions for all three fundamental initial conditions of the system of Brownian motions with one-sided reflections.

The packed initial conditions case is in fact equivalent to the GUE random matrix ensemble and has been first proven in \cite{MS14}. 
For the stationary case, Lyapunov exponents but no rate function were derived using the equivalent definition of Brownian directed percolation \cite{Jan19}.
The O'Connell-Yor polymer, from which the model discussed here can be recovered by setting the temperature parameter to zero, has also been studied for large deviations \cite{Jan15}.

More results are available for the TASEP, from which our system is a low-density limit \cite{KPS12}: 
The upper tail rate function was first derived as a indirect variational formula \cite{Jo00b}. 
By now, arbitrary initial conditions and even the much more involved multi-point large deviations have been worked out in the outstanding contribution \cite{QT21}. 
Recently also the upper tail rate function of the related ASEP was proven \cite{DZ22}.

\section{Main Results}
For the purpose of stating our main results, let us say that a system $\{x_n(t),n\in\Z\}$ satisfies an upper tail large deviation principle with rate function $r(a)$ if and only if for any $a>0$ the following limit exists and holds:
\begin{equation}
  \lim_{t\to\infty}t^{-1}\log\Pb\left(x_t(t)\geq2t+at\right)=-r(a).
\end{equation}

\begin{theorem}\label{thmStepRate}
The system $\{x^\textrm{packed}_n(t),n\in\N\}$ satisfies an upper tail large deviation principle with rate function
\begin{equation}\label{eqStRate}
 r^\mathrm{packed}(a)=(2+a)\sqrt{a+\frac{a^2}{4}}+2\log\left(1+\frac{a}{2}-\sqrt{a+\frac{a^2}{4}}\right).
\end{equation}
\end{theorem}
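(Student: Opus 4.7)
My plan is to exploit the equivalence, mentioned in the introduction, between the packed reflected Brownian system and the Gaussian Unitary Ensemble (GUE), reducing the theorem to the classical upper tail large deviation principle for the largest eigenvalue. By Warren's theorem, $x_n^{\textrm{packed}}(t) \stackrel{d}{=} \sqrt{t}\,\lambda_{\max}(H_n)$ where $H_n$ is a standard $n\times n$ GUE matrix, normalized so that $\lambda_{\max}(H_n)/\sqrt{n} \to 2$. Setting $n=t$, the tail event $\{x_t(t)\geq(2+a)t\}$ rewrites as $\{\lambda_{\max}(H_t)/\sqrt{t}\geq 2+a\}$.

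The upper tail LDP for the top GUE eigenvalue is classical (Ben Arous--Dembo--Guionnet), with rate function
\[
    I(x) = \tfrac{1}{2} x\sqrt{x^2-4} - 2 \log\bigl((x+\sqrt{x^2-4})/2\bigr), \qquad x\geq 2,
\]
giving $t^{-1}\log\Pb(\lambda_{\max}(H_t)/\sqrt{t}\geq x)\to -I(x)$. It only remains to check that $I(2+a)=r^{\textrm{packed}}(a)$. Substituting $x=2+a$ yields $x^2-4 = 4a+a^2$, so $\sqrt{x^2-4}=2\sqrt{a+a^2/4}$ and the polynomial piece becomes $(2+a)\sqrt{a+a^2/4}$. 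Since the two roots $(x\pm\sqrt{x^2-4})/2$ multiply to $1$, flipping the sign inside the logarithm converts $-2\log\bigl((x+\sqrt{x^2-4})/2\bigr)$ into $2\log(1+a/2-\sqrt{a+a^2/4})$, matching \eqref{eqStRate}.

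Conceptually this is the easiest of the three cases treated in the paper, since the GUE identification imports the full Coulomb-gas large deviation machinery essentially for free; the main obstacle is purely bookkeeping: ensuring that the normalization conventions between the reflected Brownian model (driving variance $t$) and the GUE in the cited LDP align correctly, and then carrying out the algebraic simplification above. The more interesting question is whether one should instead give a unified proof by saddle-point analysis of the known Fredholm determinant formula for $\Pb(x_n(t)\leq s)$ on the scale $s=(2+a)t$, since precisely such an analysis---presumably featuring the Lambert-W function mentioned in the abstract---will be required for the flat and stationary cases, where no comparable random matrix identity is available.
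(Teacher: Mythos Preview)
Your argument is correct: the identification of $x_n^{\textrm{packed}}(t)$ with the largest eigenvalue of an $n\times n$ GUE matrix (due to Baryshnikov/Gravner--Tracy--Widom, with Warren giving the full process version) reduces the statement to the classical Ben Arous--Dembo--Guionnet upper tail LDP, and your verification that $I(2+a)=r^{\textrm{packed}}(a)$ is clean.

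This is, however, a genuinely different route from the paper's. The paper does not invoke the GUE identification at all; it works directly with the Fredholm determinant formula $\Pb(x_n^{\textrm{packed}}(t)\leq s)=\det(\Id-P_sK_{n,t}^{\textrm{packed}}P_s)_{L^2(\R)}$, inserts the scaling $n=t$, $s=(2+a)t$ together with a conjugation, and performs a steepest descent analysis on the double contour integral defining the kernel. The two real saddle points $w_\pm=-1-a/2\pm\sqrt{a+a^2/4}$ of $H(w)=\tfrac{w^2-1}{2}+(2+a)(w+1)+\log(-w)$ give the exponential rate $r^{\textrm{packed}}(a)=H(w_-)-H(w_+)$, after which Hadamard's inequality and dominated convergence show that only the $k=1$ term of the Fredholm expansion survives. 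Your approach is shorter and more conceptual, importing the result wholesale from random matrix theory; the paper's approach is self-contained and, more importantly, sets up exactly the contour choices, kernel bounds, and Hadamard/dominated convergence scaffolding that are then reused verbatim in the flat and stationary proofs. You anticipate precisely this trade-off in your final paragraph, and the paper confirms it explicitly, remarking that the packed case is re-proven because ``most parts of the proof are prerequisites for the other two cases.''
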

This is a known result \cite{MS14,Jan19} which is nevertheless proven here, as most parts of the proof are prerequisites for the other two cases, especially the stationary case.

\begin{theorem}\label{thmFlatRate}
The system $\{x_n^\textrm{flat}(t),n\in\Z\}$ satisfies an upper tail large deviation principle with rate function
\begin{equation}\label{eqFlRate}
 r^\mathrm{flat}(a)=\max_{z_a\in(-\infty,-1)}\left\{\Big(\phi(z_a)-z_a\Big)\left(\frac{z_a+\phi(z_a)}{2}+1+a\right)\right\},
\end{equation}
where the maximum is attained by the unique solution to 
\begin{equation}\label{eqza}
    (z_a+1)(\phi(z_a)+1)+a=0.
\end{equation}
\end{theorem}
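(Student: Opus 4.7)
The plan is to adapt the strategy developed for Theorem~\ref{thmStepRate} to the flat case, the starting point being the Fredholm determinant representation of $\Pb(x_n^\mathrm{flat}(t)\leq s)$ from \cite{FSW13,WFS17}, whose kernel $K_t$ is a double contour integral whose integrand involves the Lambert W function $\phi$. This appearance of $\phi$ at the level of the kernel is what ultimately produces $\phi$ in the rate function.

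First, writing $\Pb(x_t^\mathrm{flat}(t)\geq(2+a)t)=1-\det(I-K_t)$ and applying standard Fredholm-expansion estimates, the task reduces to computing
\begin{equation}
  \lim_{t\to\infty}t^{-1}\log\mathrm{tr}(K_t),
\end{equation}
provided higher-order traces $\mathrm{tr}(K_t^k)$ for $k\geq 2$ are exponentially dominated by $\mathrm{tr}(K_t)$ on this scale. Since $K_t$ is a double contour integral, $\mathrm{tr}(K_t)$ is a triple integral carrying a factor $e^{tG(w,z;a)}$ in its integrand, where $G$ depends on $a$ through the scaling $s=(2+a)t$ and on $\phi$ through the flat kernel.

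The next step is steepest descent on this triple integral. The two saddle-point equations in the contour variables, combined with the defining identity for the Lambert W function used to eliminate one variable, collapse to the single transcendental equation $(z_a+1)(\phi(z_a)+1)+a=0$. Substituting the saddle back into $G$ yields the exponent $(\phi(z_a)-z_a)\bigl(\tfrac{z_a+\phi(z_a)}{2}+1+a\bigr)$, which is exactly the expression being maximized in \eqref{eqFlRate}. The $\max$ over $z_a\in(-\infty,-1)$ is the variational form of the steepest-descent principle: every admissible contour passing through some $z\in(-\infty,-1)$ yields an upper bound of the form $e^{-tF(z,a)}$, and the true decay rate is realized by the contour through the saddle, i.e.\ the point maximizing $F(\cdot,a)$. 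The domain $(-\infty,-1)$ is the region in which the integration contour can be deformed while respecting the analytic structure of $\phi$ and the original poles of the kernel.

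The main difficulty will be making the steepest descent rigorous in the presence of $\phi$: one must deform the contour through $z_a$ while avoiding the branch cut of the Lambert W function, verify uniform domination of $|e^{tG}|$ by its value at the saddle along the deformed contour, and obtain kernel bounds strong enough to control the higher-order traces $\mathrm{tr}(K_t^k)$ for $k\geq 2$. A subsidiary point is the uniqueness of the critical point $z_a\in(-\infty,-1)$ solving \eqref{eqza}, which I would handle by exploiting monotonicity of $\phi$ on its relevant branch and the explicit dependence on $a$.
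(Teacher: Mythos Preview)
Your overall framework---Fredholm expansion, isolate the $k=1$ term via steepest descent, bound the higher-order terms by Hadamard plus exponential kernel estimates---is exactly what the paper does. But there is a structural misconception in your description of the kernel that would send you down a harder road than necessary.

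The flat kernel from \cite{FSW13,WFS17} is \emph{not} a double contour integral. It is already a \emph{single} contour integral in one complex variable $z$, with $\phi(z)$ appearing explicitly in the integrand:
\[
K_{t,n}^\textrm{flat}(\xi_1,\xi_2)=\frac{1}{2\pi\I} \int_{\Gamma_-} \D z\,\frac{e^{t z^2/2} e^{z \xi_1}(-z)^{n}}{e^{t \varphi(z)^2/2} e^{\varphi(z) \xi_2} (-\varphi(z))^{n}}.
\]
The elimination of the second contour variable via the Lambert-W relation $we^w=ze^z$ was already carried out at the level of deriving this kernel in \cite{FSW13}. Consequently, after the scaling $n=t$, $s=(2+a)t$ and a conjugation, the kernel takes the form
\[
\widehat{K}_t^\textrm{flat}(\xi_1,\xi_2)=\frac{1}{2\pi\I}\int_{\Gamma_-}\D z\, e^{tG(z)}e^{\xi_1(z+1)-\xi_2(\phi(z)+1)},
\qquad G(z)=\frac{z^2-\phi(z)^2}{2}+(1+a)(z-\phi(z)),
\]
and the steepest-descent problem is \emph{one}-dimensional, not two. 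There is a single saddle equation $G'(z)=0$, which the paper simplifies (using $\phi'(z)z(1+\phi(z))=(1+z)\phi(z)$) directly to \eqref{eqza}. So your picture of ``two saddle-point equations in the contour variables, combined with the defining identity for the Lambert W function used to eliminate one variable'' is describing work that has already been done upstream; attempting to redo it inside the asymptotic analysis would be redundant and would force you to handle a genuinely harder two-variable descent problem.

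The remaining technical points you flag are real and are handled in the paper essentially as you anticipate: the contour through $z_a$ is built explicitly from branches $L_k$ of the Lambert-W function (so the branch structure is dealt with by construction, not by ad hoc deformation), the steep-descent property along this contour is established in a separate lemma, uniqueness of $z_a$ follows from monotonicity of $z\mapsto(z+1)(\phi(z)+1)$ on $(-\infty,-1)$, and the uniform kernel bound comes from $\Re(z+1)\le z_a+1<0$ and $\Re(\phi(z)+1)\ge\phi(z_a)+1>0$ along the contour, which feeds into the Hadamard estimate exactly as in the packed case.
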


\begin{proposition}\label{propFlatAsy}
 The function $r^\mathrm{flat}(a)$ shows the asymptotic behavior
 \begin{equation}\label{eqFlRateAsy}\begin{aligned}
   r^\mathrm{flat}(a)&=\frac{4}{3}a^{3/2}+\Or\Big(a^2\Big) & &\text{as } a\to0,\\
   r^\mathrm{flat}(a)&=\frac{(a+1)^2}{2}+\Or\Big(a^3e^{-a}\Big) & &\text{as } a\to\infty.
\end{aligned}
 \end{equation}
\end{proposition}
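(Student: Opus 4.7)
The plan is to control the critical point $z_a$ defined implicitly by \eqref{eqza} in each of the two regimes, and then substitute into the variational expression in \eqref{eqFlRate}. Throughout I use that $\phi$ is the companion branch of the Lambert-$W$ relation $\phi(z)e^{\phi(z)}=ze^z$ with $\phi(z)\neq z$, mapping $(-\infty,-1)$ onto $(-1,0)$, consistent with the role of this function in the kernel of~\cite{FSW13}.

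For the small-$a$ regime, equation \eqref{eqza} forces both $z_a\to -1$ and $\phi(z_a)\to -1$. Setting $u=z_a+1$ and $v=\phi(z_a)+1$, the Lambert-$W$ relation rewrites as $(-1+v)e^v=(-1+u)e^u$. Expanding $f(x):=(-1+x)e^x$ at its critical point $x=0$, namely $f(x)=-1+\tfrac{1}{2}x^2+\tfrac{1}{3}x^3+\Or(x^4)$, and matching $f(u)=f(v)$ with $v\neq u$ yields $v=-u-\tfrac{2}{3}u^2+\Or(u^3)$. Inserting this into \eqref{eqza}, which reads $uv+a=0$, gives $u=-\sqrt{a}-a/3+\Or(a^{3/2})$. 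Substituting back, the two factors $\phi(z_a)-z_a$ and $\tfrac{1}{2}(z_a+\phi(z_a))+1+a$ reduce to $2\sqrt{a}+\Or(a^{3/2})$ and $\tfrac{2}{3}a+\Or(a^{3/2})$ respectively, and their product is $\tfrac{4}{3}a^{3/2}+\Or(a^2)$.

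For the large-$a$ regime, boundedness of $\phi$ forces $z_a\to-\infty$ via \eqref{eqza}, so that $\phi(z_a)=W_0(z_ae^{z_a})\to 0$ exponentially fast. Writing $z_a=-(a+1)+\eta$, equation \eqref{eqza} rearranges to $\eta=(a-\eta)\phi(z_a)$; combining with the defining relation $\phi(z_a)=z_ae^{z_a}e^{-\phi(z_a)}=z_ae^{z_a}(1+\Or(\phi(z_a)))$ yields $\phi(z_a)=-(a+1)e^{-(a+1)}+\Or(a^2 e^{-2a})$ and $\eta=-a(a+1)e^{-(a+1)}+\Or(a^3 e^{-2a})$. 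Substitution into \eqref{eqFlRate} then gives the two factors $(a+1)+\Or(a^2 e^{-a})$ and $\tfrac{a+1}{2}+\Or(a^2 e^{-a})$, whose product equals $\tfrac{(a+1)^2}{2}+\Or(a^3 e^{-a})$ as required.

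The main obstacle is the small-$a$ regime: using only the leading reflection $v\approx -u$ produces an exact cancellation in the product and gives the wrong prefactor $2a^{3/2}$; recovering the correct coefficient $\tfrac{4}{3}$ genuinely requires the quadratic correction $-\tfrac{2}{3}u^2$ in $v$, and hence the third-order Taylor term of $f$ about its critical point. Keeping this sub-leading bookkeeping consistent across both factors is the only real subtlety.
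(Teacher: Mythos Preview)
Your proposal is correct and follows essentially the same route as the paper's proof: in both regimes you expand $\phi$ (equivalently, the defining relation $ze^z=\phi(z)e^{\phi(z)}$) near the relevant point, solve \eqref{eqza} for $z_a$ to the needed precision, and substitute into \eqref{eqFlRate}. The only cosmetic differences are that you derive the expansion $v=-u-\tfrac{2}{3}u^2+\Or(u^3)$ from $f(u)=f(v)$ whereas the paper simply quotes $\phi(-1+h)=-1-h-\tfrac{2}{3}h^2+\Or(h^3)$, and that in the large-$a$ regime you carry out the iteration $\eta=(a-\eta)\phi(z_a)$ a bit more explicitly than the paper does.
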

As expected, the small $a$ limit matches the right tail of the GOE Tracy-Widom distribution. For large $a$ the particle $x_t$ is unusually far towards the right, so it does not significantly collide with its left neighbour anymore. It is thus reasonable that the rate function approaches the one of a free Brownian motion. However, the exponentially fast convergence is quite interesting.

\begin{figure}
\centering
\includegraphics[height=7cm]{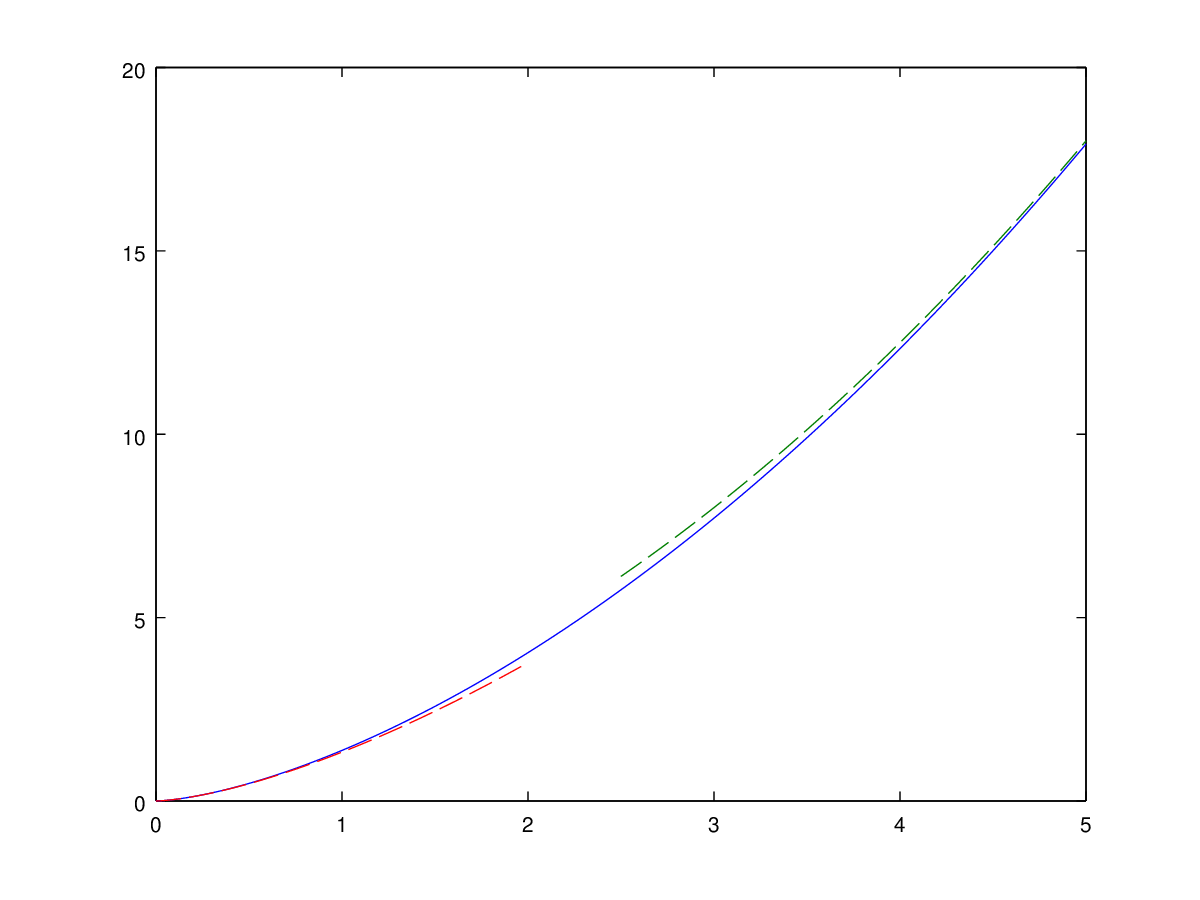}
\caption{The rate function $r^\mathrm{flat}(a)$ (solid line) and its approximating functions according to \eqref{eqFlRateAsy} at $a\to0$ and $a\to\infty$ (dashed lines).}
\label{figFlRate}
\end{figure}

\begin{theorem}\label{thmStatRate}
The system $\{x_n^\textrm{stat}(t),n\in\Z\}$ satisfies an upper tail large deviation principle with rate function
\begin{equation}
 r^\mathrm{stat}(a)=-\frac{a^2}{4}+\left(1+\frac{a}{2}\right)\sqrt{a+\frac{a^2}{4}}+\log\left(1+\frac{a}{2}-\sqrt{a+\frac{a^2}{4}}\right).
\end{equation}
\end{theorem}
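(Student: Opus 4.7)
\medskip\noindent\textbf{Proof plan.}
The plan is to mirror the approach already used for Theorems \ref{thmStepRate} and \ref{thmFlatRate}: start from the Fredholm-determinant-plus-rank-one-correction formula for $\Pb(x_t^\mathrm{stat}(t)\leq 2t+at)$ available via the methods of \cite{FSW15}, show that on the exponential scale the upper tail is captured by the linear term of the Neumann expansion of the determinant, and perform a saddle-point analysis. The only new ingredient relative to the packed case is the additive rank-one operator encoding the $\rho=1$ initial data, and the identity $r^\mathrm{stat}(a)=\tfrac{1}{2}r^\mathrm{packed}(a)-\tfrac{a^2}{4}$, evident from \eqref{eqStRate}, reveals exactly where that ingredient should show up in the asymptotics.

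Concretely, decomposing $K_{\mathrm{stat},s}=K_{\mathrm{packed},s}+R_s$ with $R_s$ rank one and using the expansion $1-\det(I-K_{\mathrm{stat},s})=\mathrm{tr}(K_{\mathrm{stat},s})+\Or(\|K_{\mathrm{stat},s}\|_1^2)$ together with the bound from the proof of Theorem \ref{thmStepRate} that the order-$n$ Neumann term decays at exponential rate at least $n\cdot r^\mathrm{packed}(a)$, the problem reduces to analysing $\mathrm{tr}(K_{\mathrm{packed},s})+\mathrm{tr}(R_s)$. The first summand was already evaluated for Theorem \ref{thmStepRate} and contributes at rate $r^\mathrm{packed}(a)>r^\mathrm{stat}(a)$, so the dominant term is the single contour integral $\mathrm{tr}(R_s)$. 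Its integrand carries a simple pole at the point corresponding to the rate $\rho=1$ of the initial exponential spacings, so the steepest-descent contour passes through a single saddle $z^*$ rather than the conjugate pair occurring in $\mathrm{tr}(K_{\mathrm{packed},s})$. The critical-point equation reduces, after simplification, to the same quadratic in $z$ as in the packed case, with admissible real root characterised by $-z^*=1+a/2-\sqrt{a+a^2/4}$; evaluating the phase at $z^*$ and absorbing the constant produced by centering $s=2t+at$ gives exactly $-r^\mathrm{stat}(a)=\tfrac{a^2}{4}-\tfrac{1}{2}r^\mathrm{packed}(a)$. The factor $\tfrac{1}{2}$ appears because $\mathrm{tr}(R_s)$ is a single (rather than double) contour integral, while the $-\tfrac{a^2}{4}$ correction comes from the Gaussian quadratic weight attached to the rank-one pole.

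The matching lower bound I would obtain from the Skorokhod-type maximum representation $x_t^\mathrm{stat}(t)=\max_{0\leq k\leq t}\bigl[\,x_k^\mathrm{stat}(0)+\tilde X^{(k)}_{t-k}(t)\,\bigr]$, where $\tilde X^{(k)}$ is an independent packed subsystem driven by $B_{k+1},\dots,B_t$ and is therefore independent of $x_k^\mathrm{stat}(0)$. Restricting to a favourable split $k^*=\alpha^* t$ with $x_{k^*}^\mathrm{stat}(0)\approx\beta^* t$, Cram\'er's theorem for the sum of $\alpha^* t$ independent $\mathrm{Exp}(1)$ variables gives the cost of the initial-data deviation, and Theorem \ref{thmStepRate} combined with Brownian scaling gives the cost of $\tilde X^{(k^*)}_{(1-\alpha^*)t}(t)\geq(2+a-\beta^*)t$; optimising the resulting variational expression in $(\alpha,\beta)$ reproduces $r^\mathrm{stat}(a)$. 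The main obstacle will be the saddle-point analysis of $\mathrm{tr}(R_s)$ when $z^*$ sits close to the pole of the rank-one factor (which happens for small $a$), where the steepest-descent contour has to be deformed carefully so as to keep the leading asymptotics valid and pick up the correct residue-type contribution; a secondary technical point is controlling the $\max$ over $k$ uniformly in the lower bound, which reduces to a straightforward covering argument once $(\alpha^*,\beta^*)$ has been pinned down.
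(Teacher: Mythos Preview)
Your plan has a genuine gap at the very first step. The formula from \cite{FSW15} you invoke is \emph{not} ``Fredholm determinant plus rank-one correction''; it reads
\[
\Pb\big(x_n^{\textrm{stat},\rho}(t)\leq s\big)=\Big(1+\tfrac{1}{1-\rho}\tfrac{\D}{\D s}\Big)\det(\Id-P_s K^{\textrm{stat}}_{n,t}P_s),\qquad K^{\textrm{stat}}_{n,t}=K^{\textrm{packed}}_{n,t}+(1-\rho)\,f\!\odot\!g_\rho,
\]
valid only for $\rho<1$. At $\rho=1$ the prefactor $\tfrac{1}{1-\rho}$ blows up while the rank-one piece $(1-\rho)f\!\odot\!g_\rho$ vanishes, and in fact $\langle P_sf,P_sg_\rho\rangle$ itself diverges like $(1-\rho)^{-1}$. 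So your Neumann expansion $1-\det(\Id-K_{\textrm{stat},s})=\mathrm{tr}(K_{\textrm{stat},s})+\Or(\|K_{\textrm{stat},s}\|_1^2)$ is useless here: the trace of the rank-one part tends to $1$, not to something exponentially small, and you have entirely ignored the differential operator in front. The paper resolves this by an explicit analytic continuation (its Proposition~\ref{propKernel1}), after which the formula becomes $\tfrac{\D}{\D s}\big(G_n(s)\det(\Id-P_sK^{\textrm{packed}}_{n,t}P_s)\big)$ with $G_n$ containing a \emph{single} $z$-contour integral $R_n(s)$; it is the steepest-descent of $\widehat R_t$ over $\gamma_+$ that produces the rate $H(w_+)=r^{\textrm{stat}}(a)$.

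Two smaller points. First, the identity $r^{\textrm{stat}}=\tfrac12 r^{\textrm{packed}}-\tfrac{a^2}{4}$ is correct but your explanation (``single vs.\ double contour gives a factor $1/2$'') is not the mechanism: the actual reason is $r^{\textrm{stat}}=H(w_+)$, $r^{\textrm{packed}}=H(w_+)-H(w_-)$, together with the symmetry $H(w_+)+H(w_-)=-a^2/2$ (which the paper derives by differentiating in $a$). Second, once the exact formula is handled correctly it yields matching upper and lower bounds simultaneously, so your separate Skorokhod/variational lower bound is superfluous here; it is a legitimate alternative route (closer in spirit to \cite{Jan19}), but you would then still need a matching upper bound, and your trace argument does not provide one for the reasons above.
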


\begin{remark}
  The function $r^\mathrm{stat}(a)$ shows the asymptotic behavior
 \begin{equation}\label{eqStRateAsy}\begin{aligned}
   r^\mathrm{stat}(a)&=\frac{2}{3}a^{3/2}+\Or\left(a^2\right) & &\text{as } a\to0,\\
   r^\mathrm{stat}(a)&=a+\frac{1}{2}-\log a+\Or\left(a^{-1}\right) & &\text{as } a\to\infty.
\end{aligned}
 \end{equation}

\end{remark}

\section{Packed initial condition}\label{secPac}
The starting point of our analysis is the determinantal formula for the multi-point distribution, given by \cite[Proposition~5.1]{WFS17}, which is reduced to the one-point distribution in the following corollary:
\begin{proposition}\label{proSteppKernel}
For any $n\geq0$, it holds
\begin{equation}\label{eqStep33}
\Pb\bigg(x^\textrm{packed}_n(t)\leq s\bigg)=\det(\Id-P_s K_{n,t}^\textrm{packed} P_s)_{L^2(\R)},
\end{equation}
where $P_s(\xi)=\Id_{\xi>s}$ and the kernel $K_{n,t}^\textrm{packed}$ is given by
\begin{equation}\label{eqKpacked}
    K_{n,t}^\mathrm{packed}(\xi_1,\xi_2)=\frac{1}{(2\pi\I)^2}\int_{\I\R-\e}\D w\,\oint_{\Gamma_0}\D z\,\frac{e^{{t} w^2/2+\xi_1w}}{e^{{t} z^2/2+\xi_2 z}}\frac{(-w)^{n}}{(-z)^{n}}\frac{1}{w-z}.
\end{equation}
\end{proposition}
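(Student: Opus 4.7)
The plan is to obtain Proposition \ref{proSteppKernel} by specializing the multi-point determinantal formula [WFS17, Proposition~5.1] to the case of a single time index. That multi-point result expresses $\Pb\big(x^{\textrm{packed}}_{n_j}(t)\leq s_j,\,j=1,\ldots,m\big)$ as a Fredholm determinant on $L^2(\{1,\ldots,m\}\times\R)$ for an extended kernel $K(n_1,\xi_1;n_2,\xi_2)$. Setting $m=1$ collapses the ambient Hilbert space to $L^2(\R)$ and the kernel to $K_{n,t}^{\textrm{packed}}(\xi_1,\xi_2):=K(n,\xi_1;n,\xi_2)$, reducing the task to identifying this one-point kernel with \eqref{eqKpacked}.

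The key structural point is that extended kernels of Eynard--Mehta/biorthogonal type typically decompose as $K(n_1,\xi_1;n_2,\xi_2)=\tilde K(n_1,\xi_1;n_2,\xi_2)-\phi^{n_2-n_1}(\xi_1,\xi_2)\Id_{n_1<n_2}$, where $\tilde K$ is a double contour integral in the spirit of \eqref{eqKpacked} and the second summand is a transition/convolution piece supported on strictly ordered indices. For $n_1=n_2=n$ the indicator $\Id_{n<n}$ vanishes, so only $\tilde K$ survives, and what remains is to check that $\tilde K(n,\xi_1;n,\xi_2)$ agrees term-by-term with the right-hand side of \eqref{eqKpacked}.

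The contour choices in \eqref{eqKpacked} are arranged so the two contours never meet: $w$ runs over $\I\R-\e$ and $z$ over a small loop $\Gamma_0$ around the origin, which I would take of radius strictly less than $\e$. The integrand has a pole of order $n$ at $z=0$ coming from $(-z)^{-n}$ and a simple pole along $w=z$ coming from $(w-z)^{-1}$; neither needs to be crossed when deforming from whatever contour convention [WFS17] uses to the one in \eqref{eqKpacked}. Absolute convergence along $\I\R-\e$ is immediate from $|e^{tw^2/2}|=e^{t(\e^2-\Im(w)^2)/2}$, which decays rapidly at infinity.

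The hard part is quite mild: it amounts to bookkeeping---reading off the precise form of the extended kernel from [WFS17], stripping away the transition piece via the $n_1=n_2$ specialization, and confirming that the contour conventions line up. There is no substantive analytic obstacle, because the integrand has no singularities other than those listed, so any required contour deformations are routine applications of Cauchy's theorem.
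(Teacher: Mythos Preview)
Your proposal is correct and matches the paper's approach exactly: the paper presents this proposition as a direct corollary of the multi-point formula [WFS17, Proposition~5.1] obtained by restricting to a single particle index, with no further proof given. Your discussion of how the extended-kernel transition piece drops out when $n_1=n_2$ and of the contour conventions simply fills in the routine bookkeeping the paper leaves implicit.
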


\begin{proof}[Proof of Theorem \ref{thmStepRate}]
 We start by defining a modified kernel, which is the result of applying the scaling $n=t$, $s=2t+at$ as well as a conjugation, which does not change the determinant.
 	\begin{equation}\label{eq5}\begin{aligned}	\widehat{K}_{t}^\mathrm{packed}(\xi_1,\xi_2)&=e^{\xi_1-\xi_2}K_{n,t}^\mathrm{packed}(2t+at+\xi_1,2t+at+\xi_2)\\
 	&=\frac{1}{(2\pi\I)^2}\int_{\I\R-\e}\D w\,\oint_{\Gamma_0}\D z\,e^{tH(w)-tH(z)}\frac{e^{\xi_1(w+1)-\xi_2(z+1)}}{w-z},
\end{aligned}\end{equation}
where
\begin{equation}
 H(w)=\frac{w^2-1}{2}+(2+a)(w+1)+\log(-w).
\end{equation}
The probability of large deviations towards the right tail can then be calculated by
\begin{equation}\label{eq11}\begin{aligned}
 \Pb\left(x^\textrm{packed}_t(t)\geq 2t+at\right)&=1-\det(\Id-P_{2t+at} K_{t,t}^\textrm{packed} P_{2t+at})_{L^2(\R)}\\
 &=1-\det(\Id-P_0 \widehat{K}_t^\textrm{packed} P_0)_{L^2(\R)}\\
 &=\sum_{k=1}^\infty\frac{(-1)^{k+1}}{k!} \int_{\R_+^k}\D \xi_1\dots\D \xi_k\det_{1\leq i,j\leq k}[\widehat{K}_t^\textrm{packed}(\xi_i,\xi_j)],
\end{aligned}\end{equation}
which leaves us with the main open task of obtaining the asymptotics of the kernel $\widehat{K}_{t}^\mathrm{packed}$ as $t\to\infty$. The main tool to achieve this is \emph{steepest descent analysis}.

The function $H$ has two saddle points, which are found by setting the derivative  $H'(w)=w+2+a+1/w=0$, giving
\begin{equation}
 w_\pm = -1-\frac{a}{2}\pm\sqrt{a+\frac{a^2}{4}},
\end{equation}
where $w_+>-1+\sqrt{a/(a+2)}$ is a local maximum and $w_-<-1-a$ is a local minimum on the real line.

We specify the contours of the integrals in \eqref{eq5} to $w\in\gamma_-=\left\{w_-+it, t\in\R\right\}$ and $z\in\gamma_+=\left\{w_+e^{it}, t\in[0,2\pi)\right\}$. The contours satisfy the requirements for steepest descent analysis: The function $H$ evaluated at the contour $\gamma_-$ achieves its global maximum at the non-degenerate saddle point $w_-$ (ensuring stationary phase), while being strictly bounded away from the maximum value outside of a neighborhood of $w_-$ (ensuring exponentially fast decay there). Similarly, the function $-H$ along $\gamma_+$ has the critical point $w_+$ as its global maximum. Then all asymptotically relevant contributions come from a neighborhood of the critical points.
\begin{equation}\begin{aligned}\label{eq9}
  \lim_{t\to\infty}&te^{tr^\mathrm{packed}(a)}\widehat{K}_{t}^\mathrm{packed}(\xi_1,\xi_2)=\lim_{t\to\infty}te^{-t(H(w_-)-H(w_+))}\widehat{K}_{t}^\mathrm{packed}(\xi_1,\xi_2)\\
  &=\frac{2\pi}{\sqrt{-H''(w_+)H''(w_-)}}\frac{e^{\xi_1(w_-+1)-\xi_2(w_++1)}}{w_--w_+}
  =:\widehat{K}_\infty^\mathrm{packed}(\xi_1,\xi_2).
\end{aligned}\end{equation}

By this point-wise limit, the $k$-th term of the sum in \eqref{eq11} should decay exponentially as $e^{-ktr^\mathrm{packed}(a)}$, so that the only asymptotically relevant part is the leading order term $k=1$.

However, for convergence and interchangeability of the limit with the series and integral, we still need a uniformly integrable/summable bound. We obtain the kernel bound by noticing $\Re(w+1)=w_-+1<-a$ on $\gamma_-$ and $\Re(z+1)\geq w_++1>\sqrt{a/(a+2)}$ on $\gamma_+$.
\begin{equation}\begin{aligned}	\left|\widehat{K}_{t}^\mathrm{packed}(\xi_1,\xi_2)\right|&\leq\frac{1}{(2\pi)^2}\int_{\gamma_-}|\D w|\,\oint_{\gamma_+}|\D z|\,\left|e^{tH(w)-tH(z)}\frac{e^{\xi_1(w+1)-\xi_2(z+1)}}{w-z}\right|\\
&=\frac{1}{(2\pi)^2}\int_{\gamma_-}|\D w|\,\oint_{\gamma_+}|\D z|\,\left|\frac{e^{tH(w)-tH(z)}}{w-z}\right|e^{\xi_1\Re(w+1)-\xi_2\Re(z+1)}\\
&\leq\frac{e^{-a\xi_1-\sqrt{a/(a+2)}\xi_2}}{(2\pi)^2}\int_{\gamma_-}|\D w|\,\oint_{\gamma_+}|\D z|\,\left|\frac{e^{tH(w)-tH(z)}}{w-z}\right|.
\end{aligned}\end{equation}
We can apply the same steepest descent argument as before to the integral part, which implies its exponential decay as $t\to\infty$. The following bounds (assuming $t\geq1$, and with $C_1$ depending on $a$ only) of various detail are an immediate consequence:
\begin{equation}\label{eq12}\begin{aligned}\left|\widehat{K}_{t}^\mathrm{packed}(\xi_1,\xi_2)\right|&<C_1e^{-a\xi_1-\sqrt{a/(a+2)}\xi_2}t^{-1}e^{-tr^\mathrm{packed}(a)}
\\&\leq C_1e^{-a\xi_1}e^{-tr^\mathrm{packed}(a)}\leq C_1e^{-a\xi_1}.\end{aligned}\end{equation}
Combining the kernel bound with the Hadamard inequality yields
\begin{equation}\label{eqHad}
    \begin{aligned}
        |\eqref{eq11}|&\leq\sum_{k=1}^\infty\frac{1}{k!} \int_{\R_+^k}\D \xi_1\dots\D \xi_k\left|\det_{1\leq i,j\leq k}[\widehat{K}_t^\textrm{packed}(\xi_i,\xi_j)]\right|\\
        &\leq\sum_{k=1}^\infty\frac{1}{k!} \int_{\R_+^k}\D \xi_1\dots\D \xi_k\, k^{k/2}\prod_{i=1}^kC_1e^{-a\xi_i}=\sum_{k=1}^\infty\frac{k^{k/2}}{k!}\frac{C_1^k}{a^k} < \infty,
    \end{aligned}
\end{equation}
where the sum converges by Stirling's approximation. Applying dominated convergence and the point-wise kernel limit, we arrive at
\begin{equation}\begin{aligned}
 &\lim_{t\to\infty}te^{tr^\mathrm{packed}(a)}\Pb\left(x_t(t)\geq 2t+at\right)\\
 &=\lim_{t\to\infty}\sum_{k=1}^\infty\frac{\big(-te^{tr^\mathrm{packed}(a)}\big)^{1-k}}{k!} \int_{\R_+^k}\D \xi_1\dots\D \xi_k\det_{1\leq i,j\leq k}[te^{tr^\mathrm{packed}(a)}\widehat{K}_t^\textrm{packed}(\xi_i,\xi_j)]
 \\&=\sum_{k=1}^\infty\frac{\lim_{t\to\infty}\big(-te^{tr^\mathrm{packed}(a)}\big)^{1-k}}{k!} \int_{\R_+^k}\D \xi_1\dots\D \xi_k\det_{1\leq i,j\leq k}[\widehat{K}_\infty^\textrm{packed}(\xi_i,\xi_j)]
 \\&=\int_{\R_+}\D \xi\,\widehat{K}_\infty^\textrm{packed}(\xi,\xi) = \frac{2\pi w_-w_+}{\left(w_--w_+\right)^2\sqrt{\left(w_-^2-1\right)\left(1-w_+^2\right)}} > 0.
\end{aligned}\end{equation}
This directly implies Theorem~\ref{thmStepRate}.
\end{proof}

\section{Periodic initial condition}
Again, we start from a determinantal formula for the distribution of the $n$-th particle, which is now a special case of \cite[Proposition 2.2]{FSW13}:
\begin{proposition}\label{propFlat}
For any $n\geq0$, it holds
\begin{equation}
\Pb\left(x^\textrm{flat}_n(t)\leq s\right)=\det(\Id-P_s K_{t,n}^\textrm{flat} P_s)_{L^2(\R)},
\end{equation}
where $P_s(\xi)=\Id_{\xi>s}$ and the kernel $K_{t,n}^\textrm{flat}$ is given by
\begin{equation}\label{eqKtflat}
\begin{aligned}
K_{t,n}^\textrm{flat}&(\xi_1,\xi_2)=\frac{1}{2\pi\I} \int_{\Gamma_-} \D z\,\frac{e^{t z^2/2} e^{z \xi_1}(-z)^{n}}{e^{t \varphi(z)^2/2} e^{\varphi(z) \xi_2} (-\varphi(z))^{n}}.
\end{aligned}
\end{equation}
The function $\varphi$ is defined by $\varphi(z)=L_0(z e^{z})$, with $L_0$ denoting the Lambert-W function, \textit{i.e.}, the principal solution for $w$ in $z=w e^w$, and $\Gamma_-$ is any path going from $\infty e^{-\theta \I}$ to $\infty e^{\theta \I}$ with $\theta\in [\pi/2,3\pi/4)$, crossing the real axis to the left of $-1$, and such that $\varphi(z)$ is continuous and bounded.
\end{proposition}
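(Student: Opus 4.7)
The plan is to obtain Proposition~\ref{propFlat} as a direct specialization of \cite[Proposition~2.2]{FSW13}, which provides a Fredholm determinant formula for the joint distribution of an arbitrary finite collection of particles $(x^{\textrm{flat}}_{n_1}(t_1),\dots,x^{\textrm{flat}}_{n_m}(t_m))$ via an extended correlation kernel involving the Lambert-W function. My first step is to restrict this multi-point formula to $m=1$ with $n_1=n$ and $t_1=t$, so that the extended (piecewise-defined) part of the kernel, which couples different time indices, vanishes identically and one is left with a genuine single-time kernel acting on $L^2(\R)$ with projection $P_s$.

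Second, I would carry out the algebraic simplification of the FSW13 kernel at $m=1$. The reference kernel is typically written as a double contour integral; at the one-point level, the $w$-integral can be evaluated by residues located precisely at $w=\phi(z)=L_0(ze^z)$, the Lambert image of the other integration variable, collapsing the double integral into the single contour integral displayed in \eqref{eqKtflat}. The factors $e^{tz^2/2}e^{z\xi_1}(-z)^n$ and $e^{t\phi(z)^2/2}e^{\phi(z)\xi_2}(-\phi(z))^n$ emerge naturally from the forward and dual halves of the single-time kernel, with $\phi$ encoding the reflection symmetry inherent to the flat (periodic) initial condition.

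Third, I would verify the admissibility of the contour $\Gamma_-$. The principal branch $L_0$ is holomorphic on $\C\setminus(-\infty,-e^{-1}]$, and the preimage under $z\mapsto ze^z$ of this branch cut meets the real axis only at $z=-1$. Consequently, requiring that $\Gamma_-$ cross the real axis strictly to the left of $-1$ guarantees that $\phi$ is well-defined and continuous on $\Gamma_-$, while the opening angles $\theta\in[\pi/2,3\pi/4)$ ensure that $\Re(z^2)<0$ at infinity along the contour so that the Gaussian factor $e^{tz^2/2}$ decays; combined with the asymptotic $\phi(z)=z+\Or(e^z)$ on the left half-plane, this also keeps $\phi$ bounded along $\Gamma_-$ and makes the integral absolutely convergent.

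The main technical obstacle is the branch-cut bookkeeping for the Lambert-W function and the verification that any two admissible contours of the stated type are homotopic through contours on which the integrand is holomorphic, so that the particular choice of $\Gamma_-$ is immaterial. Once this is settled, the remainder of the argument is a direct substitution into \cite[Proposition~2.2]{FSW13}.
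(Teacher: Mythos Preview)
Your overall approach---specialize \cite[Proposition~2.2]{FSW13} to the single point $m=1$---is exactly what the paper does; indeed the paper offers no proof at all beyond the sentence ``which is now a special case of \cite[Proposition~2.2]{FSW13}.'' So at the level of strategy you are aligned.

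However, your second step misrepresents what that specialization actually entails. The extended kernel in \cite[Proposition~2.2]{FSW13} is already stated as a \emph{single} contour integral in $z$ involving $\varphi(z)=L_0(ze^z)$, plus an additive term that couples distinct space--time points and vanishes when $m=1$. There is no double contour integral to collapse and no residue at $w=\varphi(z)$ to pick up; the Lambert-W structure is present in the multi-point kernel from the outset, having been produced in the \emph{derivation} of Proposition~2.2 in \cite{FSW13}, not in its one-point reduction. Your proposal would have you redoing part of that derivation rather than simply quoting the result.

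There is also a slip in your third step: the asymptotic $\varphi(z)=z+\Or(e^z)$ is incorrect and would in fact make $\varphi$ unbounded along $\Gamma_-$. For $\Re z\to -\infty$ one has $ze^z\to 0$ and hence $\varphi(z)=L_0(ze^z)\sim ze^z\to 0$, which is what gives the required boundedness. The conclusion you want is right, but the stated expansion contradicts it.
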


\begin{proof}[Proof of Theorem \ref{thmFlatRate}]
 With $z/\phi(z)=e^{-z+\phi(z)}$ the kernel transforms to
 \begin{equation}
  K_{t,n}^\textrm{flat}(\xi_1,\xi_2)=\frac{1}{2\pi\I} \int_{\Gamma_-} \D z\,\frac{e^{t z^2/2} e^{z (\xi_1-n)}}{e^{t \varphi(z)^2/2} e^{\varphi(z) (\xi_2-n)}}.
 \end{equation}
Applying a conjugating factor and inserting the scaling, we arrive at a new kernel
 \begin{equation}\begin{aligned}\label{eq8}
  \widehat{K}_t^\textrm{flat}(\xi_1,\xi_2)&=e^{\xi_1-\xi_2}K_{t,t}^\textrm{flat}(2t+at+\xi_1,2t+at+\xi_2)\\&=\frac{1}{2\pi\I} \int_{\Gamma_-} \D z\, e^{t G(z)}e^{\xi_1(z+1)-\xi_2(\phi(z)+1)},
 \end{aligned}\end{equation}
where
\begin{equation}\label{eqG}
 G(z)=\frac{z^2-\phi(z)^2}{2}+(1+a)(z-\phi(z))=\frac{(z+1)^2}{2}-\frac{(\phi(z)+1)^2}{2}+a(z-\phi(z)).
\end{equation}
We find the critical points by setting $G'(z)=0$. Using $\phi'(z)z(1+\phi(z))=(1+z)\phi(z)$, we can simplify
\begin{equation}
 \begin{aligned}
  G'(z)&=z+1-(\phi(z)+1)\phi'(z)+a(1-\phi'(z))=\\
  &=z+1-\frac{(z+1)\phi(z)}{z}+a\cdot\frac{z(1+\phi(z))-(1+z)\phi(z)}{z(\phi(z)+1)}\\
  &=\frac{z-\phi(z)}{z(\phi(z)+1)}\Big((z+1)(\phi(z)+1)+a\Big).
 \end{aligned}
\end{equation}
As $z$ and $\phi(z)$ occupy disjoint branches of the Lambert function, the derivative vanishes exactly if $z$ satisfies \eqref{eqza}.

We have to show that \eqref{eqza} has indeed a unique solution $z_a\in (-\infty,-1)$ for each $a>0$. Recognizing that the function $z\mapsto ze^z$, $(-1,0)\rightarrow (-e^{-1},0)$ is bijective and increasing, its inverse shares these properties. Together with $z\mapsto ze^z$, $(-\infty,-1)\rightarrow (-e^{-1},0)$ being bijective and decreasing, we see that $\phi$ is bijective and decreasing when considered on $(-\infty,-1)\rightarrow (-1,0)$. Consequently, the function $z\mapsto (z+1)(\phi(z)+1)$, $(-\infty,-1)\rightarrow (-\infty,0)$ is bijective and increasing, implying the existence and uniqueness of the claimed $z_a$.

\begin{figure}
\centering
\includegraphics[height=5cm]{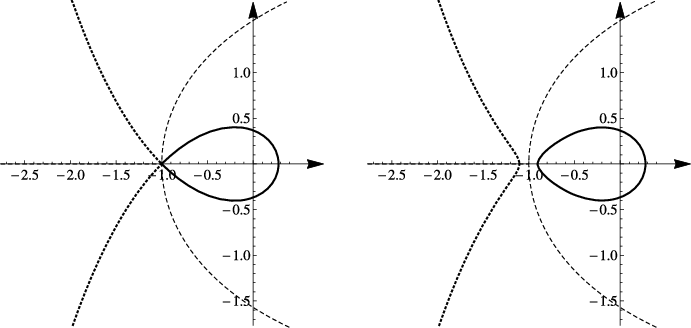}
\caption{The contour $\gamma$ (dotted line) and its image under $\phi$ (solid line)
 for the limiting case $a=0$ (left picture) and 
some positive $a$ (right
picture). The dashed lines separate the ranges
of the principal branch $0$ (right) and the branches $1$ (top left)
and $-1$ (bottom left) of the Lambert W function.}
\label{figGamma}
\end{figure}

In \eqref{eq8} we specify the contour $\Gamma_-$ to $\gamma$, defined by
\begin{equation}
 \gamma(\tau
)=L_{\lfloor\tau\rfloor+1_{\tau\geq0}}\left(z_ae^{z_a+2\pi\I\tau}\right),\tau\in\R.
\end{equation}
This contour passes through the critical point $z_a$ and is a steep descent curve by Lemma~\ref{lemContour}. It satisfies the differential identity $\gamma'(\tau)=2\pi\I\gamma(\tau)/(1+\gamma(\tau))$, giving
 \begin{equation}
  \begin{aligned}
  \eqref{eq8}=\int_\R\D \tau\, e^{tG(\gamma(\tau))}\frac{\gamma(\tau)}{1+\gamma(\tau)}e^{\xi_1(\gamma(\tau)+1)-\xi_2(\phi(\gamma(\tau))+1)}.
  \end{aligned}
 \end{equation}
All asymptotically relevant contributions to this integral come from a neighbourhood of $\tau=0$, as this is the global maximum of $G(\gamma(\tau))$. Precisely,
\begin{equation}\label{eqLaplace}
 \lim_{t\to\infty}\sqrt{t}e^{-t G(z_a)}\widehat{K}_t^\textrm{flat}(\xi_1,\xi_2)=\sqrt{\frac{2\pi}{-\eta}}\frac{z_a}{1+z_a}e^{\xi_1(z_a+1)-\xi_2(\phi(z_a)+1)}=: \widehat{K}_\infty^\textrm{flat}(\xi_1,\xi_2),
\end{equation}
where 
\begin{equation}
 \eta=\frac{\D^2}{\D\tau^2}G(\gamma(\tau))\big|_{\tau=0}=4\pi^2(\phi(z_a)-z_a)\left(\frac{\phi(z_a)}{(\phi(z_a)+1)^2}+\frac{z_a}{(z_a+1)^2}\right).
\end{equation}
By $z_a<-1$ and $-1<\phi(z_a)<0$, $\eta$ is strictly negative. We have thus established the pointwise kernel limit implicating exponential decay of $\widehat{K}_t^\textrm{flat}$ with rate $t$ and rate function $G(z_a) = -r^\mathrm{flat}(a)$.

We are left with the task of finding uniformly integrable/summable bounds. By Lemma A.1 (3) and (9) \cite{FSW13}, the inequalities $z\leq z_a<-1$ and $\phi(z)\geq\phi(z_a)>-1$ hold for $z\in\gamma$, so we can find a upper estimate of the kernel similarly as in the previous section.

 \begin{equation}\begin{aligned}
  \left|\widehat{K}_t^\textrm{flat}(\xi_1,\xi_2)\right|&\leq\frac{1}{2\pi} \int_{\gamma} |\D z| \left|e^{t G(z)}\right|e^{\xi_1\Re(z+1)-\xi_2\Re(\phi(z)+1)}\\
  &\leq\frac{e^{\xi_1(z_a+1)-\xi_2(\phi(z_a)+1)}}{2\pi} \int_{\Gamma_-} |\D z| \left|e^{t G(z)}\right|\\
  &\leq C_2e^{(z_a+1)\xi_1-(\phi(z_a)+1)\xi_2}\leq C_2e^{(z_a+1)\xi_1}
 \end{aligned}\end{equation}
Kernel bound and Hadamard's inequality ensure uniform absolute convergence as in \eqref{eqHad}, finishing the proof by
\begin{equation}\begin{aligned}
 \lim_{t\to\infty}&\sqrt{t}e^{tr^\mathrm{flat}(a)}\Pb\left(x^\textrm{flat}_t(t)\geq 2t+at\right)
 =\lim_{t\to\infty}\sqrt{t}e^{tr^\mathrm{flat}(a)}\left(1-\det(\Id-P_0 \widehat{K}_t^\textrm{flat} P_0)_{L^2(\R)}\right)\\
 &=\lim_{t\to\infty}\sum_{k=1}^\infty\frac{\big(-\sqrt{t}e^{tr^\mathrm{flat}(a)}\big)^{1-k}}{k!} \int_{\R_+^k}\D \xi_1\dots\D \xi_k\det_{1\leq i,j\leq k}[\sqrt{t}e^{tr^\mathrm{flat}(a)}\widehat{K}_t^\textrm{flat}(\xi_i,\xi_j)]
 \\&=\sum_{k=1}^\infty\frac{\lim_{t\to\infty}\big(-\sqrt{t}e^{tr^\mathrm{flat}(a)}\big)^{1-k}}{k!} \int_{\R_+^k}\D \xi_1\dots\D \xi_k\det_{1\leq i,j\leq k}[\widehat{K}_\infty^\textrm{flat}(\xi_i,\xi_j)]
 \\&=\int_{\R_+}\D \xi\,\widehat{K}_\infty^\textrm{flat}(\xi,\xi) = \sqrt{\frac{2\pi}{-\eta}}\frac{z_a}{\left(1+z_a\right)\left(\phi(z_a)-z_a\right)} > 0.
\end{aligned}\end{equation}
\end{proof}

\begin{lemma}\label{lemContour}
 The contour $\gamma\colon \R\to\C$ being defined by
 \begin{equation}
 \gamma(\tau
)=L_{\lfloor\tau\rfloor+1_{\tau\geq0}}\left(z_ae^{z_a+2\pi\I\tau}\right),
\end{equation}
is continuous in $\tau$. The global maximum of $\Re\,G(z)$, where $G(z)$ is given by \eqref{eqG}, along $\gamma$ is at the point $z_a=\gamma(0)$. Furthermore, for every small $\delta>0$ there exists a $\e>0$ such that $\Re\,G(\gamma(\tau))<\Re\,G(z_a)-\e$ for all $\tau\in\R\setminus(-\delta,\delta)$.
\end{lemma}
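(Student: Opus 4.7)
The plan is to handle the three claims in sequence, with the crux being an algebraic reduction of $\Re\,G$ along $\gamma$ to a one-dimensional monotonicity problem.

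For continuity, note that $\tau\mapsto z_a e^{z_a+2\pi\I\tau}$ traces the circle $|w|=|z_a|e^{z_a}$ around the origin, and at each integer $\tau=m$ the image equals $z_a e^{z_a}\in(-1/e,0)$, which lies on the branch cut of every non-principal Lambert branch. Off the integers the selected branch $L_{\lfloor\tau\rfloor+1_{\tau\geq 0}}$ is analytic in a neighborhood of the image, so continuity is immediate. At each integer, I would directly verify the standard gluing across the Lambert branch cut; the index shift is engineered precisely so that the one-sided limits of $\gamma$ match.

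For the global maximum, the key observation is that both $\gamma(\tau)$ and $\phi(\gamma(\tau))$ satisfy $we^w=z_a e^{z_a+2\pi\I\tau}$. Taking moduli yields the level-curve identities
\begin{equation*}
 |\gamma(\tau)|^2=z_a^2 e^{2(z_a-\Re\gamma(\tau))},\qquad |\phi(\gamma(\tau))|^2=z_a^2 e^{2(z_a-\Re\phi(\gamma(\tau)))}.
\end{equation*}
Writing $x=\Re\gamma(\tau)$ and $u=\Re\phi(\gamma(\tau))$ and using $\Re(w^2)=2(\Re w)^2-|w|^2$, a direct substitution reduces $\Re\,G$ along $\gamma$ to
\begin{equation*}
 \Re\,G(\gamma(\tau))=h(x)-h(u),\qquad h(w):=w^2-\tfrac{z_a^2}{2}e^{2(z_a-w)}+(1+a)w.
\end{equation*}
Next I apply the inequalities $\Re\gamma(\tau)\leq z_a$ and $\Re\phi(\gamma(\tau))\geq \phi(z_a)$ along $\gamma$ supplied by \cite[Lemma~A.1]{FSW13} (with simultaneous equality only at $\tau=0$). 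A short computation gives $h'(z_a)=(z_a+1)^2+a>0$ and $h'(\phi(z_a))=(\phi(z_a)+1)^2+a>0$, while $h''(w)=2-2z_a^2 e^{2(z_a-w)}$ is negative on $(-\infty,z_a]$ and positive on $[\phi(z_a),\infty)$ (the latter because $\phi(z_a)+\log|\phi(z_a)|=z_a+\log|z_a|$ with $|\phi(z_a)|<1$ yields $z_a+\log|z_a|<\phi(z_a)$). Concavity plus positivity at the right endpoint forces $h'>0$ on $(-\infty,z_a]$; convexity plus positivity at the left endpoint forces $h'>0$ on $[\phi(z_a),\infty)$. Hence $h$ is strictly increasing on both intervals, so $h(x)\leq h(z_a)$ and $h(u)\geq h(\phi(z_a))$, summing to $\Re\,G(\gamma(\tau))\leq h(z_a)-h(\phi(z_a))=G(z_a)$ with equality only at $\tau=0$.

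For the uniform gap, strict monotonicity and continuity give a uniform positive gap on any compact set $\delta\leq|\tau|\leq M$. For $|\tau|\to\infty$, $\gamma$ lies on the $k$-th Lambert branch with $k\sim\tau$, so $\Im\gamma(\tau)\sim 2\pi\tau$; the level-curve identity then forces $\Re\gamma(\tau)\to-\infty$ logarithmically, driving $h(\Re\gamma(\tau))\to-\infty$, while $\Re\phi(\gamma(\tau))$ and $h(\Re\phi(\gamma(\tau)))$ remain bounded because $\phi(\gamma(\tau))$ stays on a compact subset of the principal Lambert branch. The main obstacle is the careful continuity check at integer $\tau$ and the large-$|\tau|$ asymptotics of the Lambert branches; once those are settled, the rest of the argument is elementary calculus on $h$.
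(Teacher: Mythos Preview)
Your argument is correct and is essentially the same route the paper takes: the paper does not give a self-contained proof but defers to \cite[Lemmas~A.1 and A.2]{FSW13}, and your reduction of $\Re\,G(\gamma(\tau))$ via the level-curve identity $|w|e^{\Re w}=|z_a|e^{z_a}$ to the one-variable difference $h(\Re\gamma)-h(\Re\phi(\gamma))$, followed by monotonicity of $h$ on $(-\infty,z_a]$ and $[\phi(z_a),\infty)$, is precisely the technique used there. The only places left as sketches---the branch gluing at integer $\tau$ and the $|\tau|\to\infty$ asymptotics---are indeed routine and match what those lemmas provide.
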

\begin{proof}
 Equivalent to the proofs of \cite[Lemma A.1, Lemma A.2]{FSW13} with minor, straightforward modifications.
\end{proof}

\begin{proof}[Proof of Proposition \ref{propFlatAsy}]
 Around the branching point $-1$, $\phi$ follows the Taylor expansion
 \begin{equation}
  \phi(-1+h)=-1-h-\frac{2}{3}h^2+\Or(h^3).
 \end{equation}
Inserting this into \eqref{eqza}, we see that $a$ is of order $h^2$. To obtain the behaviour of $z_a$ for small $a$, we therefore use the ansatz $z_a=-1+\alpha \sqrt{a}+\beta a$, resulting in
\begin{equation}\begin{aligned}
 0&=(\alpha \sqrt{a}+\beta a)\left(-\alpha \sqrt{a}-\beta a -\frac{2}{3}\alpha^2 a+\Or(a^{3/2})\right)+a\\
 0&=(1-\alpha^2) a + \left(-2\alpha\beta-\frac{2}{3}\alpha^3\right)a^{2/3}+\Or(a^2).
\end{aligned}\end{equation}
Since $z_a<-1$, the solution is given by $z_a=-1-\sqrt{a}-a/3+\Or(a^{3/2})$ and the rate function satisfies
\begin{equation}
 r^\mathrm{flat}(a)=(2\sqrt{a}+\Or(a^{3/2}))\left(\frac{2}{3}a+\Or(a^{3/2})\right)=\frac{4}{3}a^{3/2}+\Or(a^2).
\end{equation}

To obtain the large $a$ behaviour, notice first that by $-1<\phi(z_a)<0$, the identity \eqref{eqza} gives us
\begin{equation}
 -z_a=1+\frac{a}{\phi(z_a)+1}>1+a.
\end{equation}
As the function $z\mapsto ze^z$ is of order $z$ for small $z$, so is its inverse $L_0(z)$, which implies 
$\phi(z_a)=\Or(z_ae^{z_a})=\Or(ae^{-a})$
and consequently $z_a=-1-a+\Or(a^2e^{-a})$.
Inserting this into \eqref{eqFlRate} completes the proof.
\end{proof}

\section{Poisson initial condition}
We start with a corollary of \cite[Proposition~2.4]{FSW15}:
\begin{proposition}\label{propKernel}
For any $0<\rho<1$ and $n\geq0$, it holds
\begin{equation}\label{eq33}
\Pb\bigg(x^{\textrm{stat},\rho}_n(t)\leq s\bigg)=\bigg(1+\frac{1}{1-\rho}\frac{\D}{\D s}\bigg)\det(\Id-P_s K_{n,t}^\textrm{stat} P_s)_{L^2(\R)},
\end{equation}
where $P_s(\xi)=\Id_{\xi>s}$ and the kernel $K_{n,t}^\textrm{stat}$ is given by
\begin{equation}\label{eqKtStat}
\begin{aligned}	K_{n,t}^\textrm{stat}(\xi_1,\xi_2)&=K_{n,t}^\mathrm{packed}(\xi_1,\xi_2)+(1-\rho)f(\xi_1)g_\rho(\xi_2).
\end{aligned}
\end{equation}
Here, $K_{n,t}^\mathrm{packed}(\xi_1,\xi_2)$ is defined as in~\eqref{eqKpacked}, and:
	\begin{equation}\begin{aligned}
	f(\xi)&=\frac{1}{2\pi\I}\int_{\I\R-\e}\D w\,\frac{e^{{t} (w^2-1)/2+\xi(w+1)}(-w)^{n}}{w+1},\\
	g_\rho(\xi)&=\frac{1}{2\pi\I}\oint_{\Gamma_{0,-\rho}}\D z\,\frac{e^{-{t} (z^2-1)/2-\xi(z+1)}(-z)^{-n}}{z+\rho},
\end{aligned}\end{equation}
for any fixed $0<\e<1$.
\end{proposition}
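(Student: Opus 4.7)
The plan is to derive this proposition as the $m=1$ specialization of the multi-point Fredholm determinant formula of \cite[Proposition~2.4]{FSW15}. That formula expresses the joint distribution of $(x^{\textrm{stat},\rho}_{n_i}(t_i))_{i=1,\dots,m}$ as $(1+(1-\rho)^{-1}\D/\D s_m)$ applied to the Fredholm determinant on $L^2(\{1,\dots,m\}\times\R)$ of an extended kernel that naturally splits into a ``packed'' extended part and a rank-one correction tailored to the stationary initial data.

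First I would set $m=1$ and $(n_1,t_1,s_1)=(n,t,s)$. The extended packed part then reduces to a scalar kernel on $L^2(\R)$, and a direct inspection of the double contour integral shows it coincides with $K_{n,t}^\textrm{packed}$ of \eqref{eqKpacked}. Second, I would verify that the correction takes the product form $(1-\rho)f(\xi_1)g_\rho(\xi_2)$: the function $f$ arises from the $w$-contour integral on $\I\R-\e$ with denominator $w+1$, while $g_\rho$ arises from a $z$-contour enclosing the poles $z=0$ and $z=-\rho$, forcing the contour $\Gamma_{0,-\rho}$ and the denominator $z+\rho$. The outer operator $1+(1-\rho)^{-1}\D/\D s$ survives the specialization unchanged, since it acts on the distribution function rather than inside the kernel.

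The assumption $\rho<1$ is needed to keep the pole at $z=-\rho$ inside $\Gamma_{0,-\rho}$ without colliding with other singularities and to normalize the left-sided spacings. I do not expect a genuine analytic obstacle; the sole practical difficulty is bookkeeping, namely checking that the contour deformations employed in \cite{FSW15} can be taken to the contours $\I\R-\e$ and $\Gamma_{0,-\rho}$ quoted here without crossing singularities of the integrands, and matching the $(-w)^n$, $(-z)^{-n}$ factors and the $e^{t(w^2-1)/2}$, $e^{-t(z^2-1)/2}$ prefactors exactly with those in the multi-point expression.
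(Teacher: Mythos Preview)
Your proposal is correct and matches the paper's own treatment: the paper states this proposition simply as a corollary of \cite[Proposition~2.4]{FSW15} without giving any further argument, and your plan to specialize that multi-point formula to $m=1$ with $(n_1,t_1,s_1)=(n,t,s)$ is precisely the intended derivation. The bookkeeping you anticipate (matching the contours, the $(-w)^n$, $(-z)^{-n}$ factors, and the rank-one form of the correction) is indeed all that is required.
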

To directly analyze the stationary case $\rho=1$, the formula \eqref{eq33} has to be analytically continued, which entails some effort:
\begin{proposition}\label{propKernel1}
For any $n\geq0$, it holds
\begin{equation}\label{eq41}
\Pb\bigg(x^\textrm{stat}_n(t)\leq s\bigg)=\frac{\D}{\D s}\bigg( G_n(s)\cdot\det(\Id-P_s K_{n,t}^\mathrm{packed} P_s)_{L^2(\R)}\bigg),
\end{equation}
where $P_s(\xi)=\Id_{\xi>s}$. The function $G_n(s)$ is given by
\begin{equation}
\begin{aligned}	
G_n(s)&=s-n-t+R_n(s)- \langle(\Id-P_s K_{n,t}^\mathrm{packed} P_s)^{-1}P_sf^*_s,P_sg_1\rangle,
\end{aligned}
\end{equation}
where
	\begin{equation}\begin{aligned}
	R_n(s)&=\frac{-1}{2\pi\I}\oint_{\Gamma_0}\D z\,e^{-t (z^2-1)/2-s (z+1)}\frac{(-z)^{-n}}{(1+z)^2},\\
	f^*_s(\xi)&=\frac{1}{2\pi\I}\int_{\I\R-1-\e}\D w\,\frac{e^{{t} (w^2-1)/2+\xi(w+1)}(-w)^{n}}{w+1}\\
	&+\frac{1}{(2\pi\I)^2}\int_{\I\R-\e}\D w\,\oint_{\Gamma_0}\D z\,\frac{e^{{t} (w^2-1)/2+\xi(w+1)}}{e^{{t} (z^2-1)/2+s(z+1)}}\frac{(-w)^{n}}{(-z)^{n}}\frac{1}{(w-z)(1+z)},
\end{aligned}\end{equation}
for any fixed $0<\e<1$.
\end{proposition}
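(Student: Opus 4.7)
The plan is to derive \eqref{eq41} as the analytic continuation of Proposition~\ref{propKernel} from $0 < \rho < 1$ to $\rho = 1$. First I will exploit the rank-one structure $K^\textrm{stat}_{n,t} = K^\textrm{packed}_{n,t} + (1-\rho)\, f \otimes g_\rho$ via the matrix determinant lemma to write
\begin{equation*}
\det(\Id - P_s K^\textrm{stat}_{n,t} P_s) = D(s)\bigl[1 - (1-\rho)\langle (\Id - P_s K^\textrm{packed}_{n,t} P_s)^{-1} P_s f,\, P_s g_\rho\rangle\bigr],
\end{equation*}
where $D(s) := \det(\Id - P_s K^\textrm{packed}_{n,t} P_s)$. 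Inserting into \eqref{eq33} produces an expression with an explicit $\tfrac{1}{1-\rho}$ singularity at $\rho=1$ that must be shown to cancel.

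To expose the cancellation, I will separate the residue of $g_\rho$ at $z=-\rho$: deforming $\Gamma_{0,-\rho}$ to $\Gamma_0$ gives
\begin{equation*}
g_\rho(\xi) = \tilde{g}_\rho(\xi) + \rho^{-n}\, e^{-t(\rho^2-1)/2 - \xi(1-\rho)},
\end{equation*}
with $\tilde{g}_\rho$ the integral along $\Gamma_0$ alone. The residue term fails to decay as $\xi\to\infty$ in the limit $\rho\to1$, but its exponential factor satisfies $\partial_\xi e^{-\xi(1-\rho)} = -(1-\rho)\,e^{-\xi(1-\rho)}$, so integration by parts in $\xi$ inside the inner product converts the $\tfrac{1}{1-\rho}$ into boundary contributions at $\xi=s$ and finite bulk integrals. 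A parallel shift of the $w$-contour in $f$ from $\I\R-\e$ to $\I\R-1-\e$ crosses the pole at $w=-1$ with residue $1$, yielding the shifted single-integral part of $f^*_s$ together with the double-integral correction encoding the interaction of the shifted $w$-contour with the $z$-contour. The function $R_n(s)$ emerges as a natural $s$-antiderivative: a direct computation gives $R_n'(s) = \tilde{g}_1(s) = g_1(s)-1$, so $s+R_n(s)$ antidifferentiates $g_1(s)$. Expanding the residue term as $\rho^{-n} e^{-t(\rho^2-1)/2 - \xi(1-\rho)} = 1 + (n+t-\xi)(1-\rho) + \Or((1-\rho)^2)$ and combining with the boundary terms at $\xi=s$ yields the polynomial $s-n-t$ in $G_n(s)$.

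The main obstacle is the careful bookkeeping of all contour shifts and integrations by parts. Each such step produces several finite terms alongside the principal singularity-cancelling piece, and these must be organised to yield exactly $G_n(s)$ as stated, in particular so that the residual inner product is precisely $\langle(\Id-P_s K^\textrm{packed}_{n,t} P_s)^{-1}P_s f^*_s,\,P_s g_1\rangle$ with the specific $f^*_s$ of the statement. Verifying that no spurious terms survive requires meticulous algebra with the contour-integral representations of $f$ and $g_\rho$ and their interaction with the rank-one structure.
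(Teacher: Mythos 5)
Your proposal follows essentially the same route as the paper: factorize the Fredholm determinant via the rank-one update, isolate the $1/(1-\rho)$ divergence from the pairing of the residue of $g_\rho$ at $z=-\rho$ with the non-decaying constant part of $f$ (the residue at $w=-1$ from shifting the $w$-contour), and identify $R_n$ as the finite $\Gamma_0$ contribution, with the Taylor expansion in $1-\rho$ producing $s-n-t$. The only cosmetic difference is that the paper extracts the singular term by pushing the constant $1$ through the resolvent identity $(\Id-P_sK P_s)^{-1}=\Id+(\Id-P_sK P_s)^{-1}P_sK P_s$ and computing $\langle P_s1,P_sg_\rho\rangle$ exactly (which is also how $f^*_s=K_{n,t}^\mathrm{packed}P_s1+\tilde f$ is identified), rather than by your integration by parts in $\xi$; both mechanisms are equivalent.
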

\begin{proof}
 We want to take the limit of \eqref{eq33} as $\rho\to1$. The left hand side of \eqref{eq33} is analytic in $\rho$, as the transition density is smooth, and the initial condition is analytic in $\rho$. It remains to obtain the limit of the right hand side. 
 
 Let $\rho=1-\delta$. We factorize the Fredholm determinant as follows:
 \begin{equation}\label{eq32}\begin{aligned}
   \det(\Id-P_s K_{n,t}^\mathrm{stat} P_s)&=\det(\Id-P_s K_{n,t}^\mathrm{packed} P_s)\det(\Id-\delta(\Id-P_s K_{n,t}^\mathrm{packed} P_s)^{-1}P_s f\odot P_s g_\rho)\\
   &=\det(\Id-P_s K_{n,t}^\mathrm{packed} P_s)\left(1-\delta\left\langle(\Id-P_s K_{n,t}^\mathrm{packed} P_s)^{-1}P_s f, P_s g_\rho\right\rangle\right).
 \end{aligned}\end{equation}
The operator identity
\begin{equation}
 (\Id-P_s K_{n,t}^\mathrm{packed} P_s)^{-1}=\Id+(\Id-P_s K_{n,t}^\mathrm{packed} P_s)^{-1}P_s K_{n,t}^\mathrm{packed} P_s
\end{equation}
and the residue calculation
 \begin{equation}\begin{aligned}
  f(\xi)&=1+\frac{1}{2\pi\I}\int_{\I\R-1-\e}\D w\,\frac{e^{{t} (w^2-1)/2+\xi(w+1)}(-w)^{n}}{w+1}=:1+\tilde{f}(\xi)
 \end{aligned}\end{equation}
allow for decomposing the scalar product as a sum.
 \begin{equation}\label{scPr}\begin{aligned}
 \big\langle(\Id-&P_s K_{n,t}^\mathrm{packed} P_s)^{-1}P_s f, P_s g_\rho\big\rangle\\&=\langle P_s1,P_sg_\rho\rangle+\big\langle(\Id-P_s K_{n,t}^\mathrm{packed} P_s)^{-1}P_s(K_{n,t}^\mathrm{packed} P_s1+\tilde{f}), P_s g_\rho\big\rangle
 \end{aligned}\end{equation}
The first summand contains the desired $1/\delta$ term that allows for cancellation of the diverging terms by
\begin{equation}\begin{aligned}
 \langle P_s1,P_sg_\rho\rangle&=\int_s^\infty\D x\,\frac{1}{2\pi\I}\oint_{\Gamma_{0,-\rho}}\D z\,\frac{e^{-{t} (z^2-1)/2-x (z+1)}(-z)^{-n}}{z+\rho}\\
 &=\int_s^\infty\D x\left(\frac{e^{{t} (\delta-\delta^2/2)-x \delta}}{(1-\delta)^{n}}+\frac{1}{2\pi\I}\oint_{\Gamma_{0}}\D z\,\frac{e^{-{t} (z^2-1)/2-x (z+1)}(-z)^{-n}}{z+\rho}\right)\\
 &=\frac{1}{\delta}+n+t-s+\Or(\delta)+\frac{1}{2\pi\I}\oint_{\Gamma_{0}}\D z\,\frac{e^{-{t} (z^2-1)/2-s (z+1)}(-z)^{-n}}{(z+1)(z+\rho)}.
 \end{aligned}\end{equation}
Recognizing $K_{n,t}^\mathrm{packed} P_s1+\tilde{f}=f^*_s$ and inserting this into \eqref{scPr}, we obtain
\begin{equation}
 \frac{1}{\delta}-\big\langle(\Id-P_s K_{n,t}^\mathrm{packed} P_s)^{-1}P_s f, P_s g_\rho\big\rangle\to G_n(s),
\end{equation}
as $\delta\to0$, which, together with \eqref{eq32}, completes the proof.
\end{proof}

\begin{proof}[Proof of Theorem~\ref{thmStatRate}]
We begin by inserting the scaling into \eqref{eq41}.
 \begin{equation}\label{eq53}
  \Pb\bigg(x^\textrm{stat}_t(t)\leq 2t+at\bigg)=\frac{\D}{\D s}\bigg( \widehat{G}_t(s)\cdot\det(\Id-P_s \widehat{K}_t^\mathrm{packed} P_s)_{L^2(\R)}\bigg)\bigg|_{s=0},
 \end{equation}
 where we recall from the packed initial conditions:
\begin{equation}\begin{aligned}	\widehat{K}_{t}^\mathrm{packed}(\xi_1,\xi_2)&=\frac{1}{(2\pi\I)^2}\int_{\gamma_-}\D w\,\oint_{\gamma_+}\D z\,e^{tH(w)-tH(z)}\frac{e^{\xi_1w-\xi_2z}}{w-z},\\
H(w)&=\frac{w^2-1}{2}+(2+a)(w+1)+\log(-w).
\end{aligned}\end{equation}
The function $\widehat{G}_t(s)$ is given by
\begin{equation}
\widehat{G}_t(s)=s+at+\widehat{R}_t(s)- \langle(\Id-P_s \widehat{K}_t^\mathrm{packed} P_s)^{-1}P_s\widehat{f}^*_s,P_s\widehat{g}_1\rangle,
\end{equation}
where the individual components are expressed as contour integrals with integration paths already deformed as in Section~\ref{secPac}.
	\begin{equation}\begin{aligned}
	\widehat{R}_t(s)&=\frac{-1}{2\pi\I}\oint_{\gamma_+}\D z\,e^{-tH(z)}\frac{e^{-s(z+1)}}{(1+z)^2}\\
	\widehat{f}^*_s(\xi)&=\frac{1}{2\pi\I}\int_{\gamma_-}\D w\,e^{tH(w)}\frac{e^{\xi(w+1)}}{w+1}\\
	&+\frac{1}{(2\pi\I)^2}\int_{\gamma_-}\D w\,\oint_{\gamma_+}\D z\,e^{{t} (H(w)-H(z))}\frac{e^{\xi(w+1)-s(z+1)}}{(w-z)(1+z)}\\
	\widehat{g}_1(\xi)&=\frac{1}{2\pi\I}\oint_{\Gamma_{0,-1}}\D z\,e^{-tH(z)}\frac{e^{-\xi(z+1)}}{z+1} = 1 + \frac{1}{2\pi\I}\oint_{\gamma_+}\D z\,e^{-tH(z)}\frac{e^{-\xi(z+1)}}{z+1}
\end{aligned}\end{equation}
All integrals are applicable to steepest descent analysis in perfect analogy to the packed initial conditions case. The pointwise limits of the functions are as follows:
\begin{equation}\label{eqRfgAsym}\begin{aligned}
	\widehat{R}_t(s)&=\Or\left(t^{-1/2}e^{-tH(w_+)}\right),\\
	\widehat{f}^*_s(\xi)&=\Or\left(t^{-1/2}e^{tH(w_-)}\right)+\Or\left(t^{-1}e^{t(H(w_-)-H(w_+))}\right),\\
	\widehat{g}_1(\xi)&=1+\Or\left(t^{-1}e^{-tH(w_+)}\right),\\
	\widehat{K}_{t}^\mathrm{packed}(\xi_1,\xi_2)&=\Or\left(t^{-1}e^{t(H(w_-)-H(w_+))}\right),\\
	\det(\Id-P_s \widehat{K}_t^\mathrm{packed} P_s)&=1+\Or\left(t^{-1}e^{t(H(w_-)-H(w_+))}\right).
\end{aligned}\end{equation}
Notice again that $H(w_+)>0$ and $H(w_-)<0$, so all arguments of the exponentials are negative, i.e. the $\mathcal{O}$-terms do in fact \emph{decay} exponentially.

The operator inverse in $\widehat{G}_t(s)$ can be avoided by defining $\widehat{F}_t(s)=s+at+\widehat{R}_t(s)-1$ and recognizing
\begin{equation}\label{eqF}\begin{aligned}
    \widehat{G}_t(s)&\det(\Id-P_s \widehat{K}_t^\mathrm{packed} P_s)_{L^2(\R)} \\
    &= \left(\widehat{F}_t(s) + \det(\Id - (\Id-P_s \widehat{K}_t^\mathrm{packed} P_s)^{-1}P_s\widehat{f}^*_s\odot P_s\widehat{g}_1)\right)\det(\Id-P_s \widehat{K}_t^\mathrm{packed} P_s)\\
    &=\widehat{F}_t(s)\det(\Id-P_s \widehat{K}_t^\mathrm{packed} P_s) +
    \det(\Id-P_s (\widehat{K}_t^\mathrm{packed} + \widehat{f}^*_s\odot \widehat{g}_1) P_s).
\end{aligned}\end{equation}
We divide the derivative of the first summand of \eqref{eqF} by the product rule and investigate each part separately. 
$\widehat{F}_t(s)=s+at-1+\Or(\sqrt{t}e^{-tH(w_+)})$ follows immediately from~\eqref{eqRfgAsym}, and $\widehat{F}'_t(s)=1+\Or(\sqrt{t}e^{-tH(w_+)})$ is easily derived from a straightforward steepest descent analysis of $\widehat{R}'_t(s)$. We can directly compute the derivative of the Fredholm determinant as
\begin{equation}\label{eq62}\begin{aligned}
 \frac{\D}{\D s}\det(\Id-P_s \widehat{K}_t^\mathrm{packed} P_s)&=\sum_{k=1}^\infty\frac{(-1)^k}{k!} \frac{\D}{\D s}\int_{(s,\infty)^k}\D \xi_1\dots\D \xi_k\det_{1\leq i,j\leq k}[\widehat{K}_t^\textrm{packed}(\xi_i,\xi_j)]\\
 &=\sum_{k=1}^\infty\frac{(-1)^{k-1}}{(k-1)!} \int_{(s,\infty)^{k-1}}\D \xi_2\dots\D \xi_k\det_{1\leq i,j\leq k}[\widehat{K}_t^\textrm{packed}(\xi_i,\xi_j)]\bigg|_{\xi_1=s}
\end{aligned}\end{equation}
and again use \eqref{eq12} together with the Hadamard inequality to obtain the bound
\begin{equation}
    \begin{aligned}
        |\eqref{eq62}|\Big|_{s=0}&\leq\sum_{k=1}^\infty\frac{1}{(k-1)!} \int_{\R_+^{k-1}}\D \xi_2\dots\D \xi_k\, k^{k/2}\prod_{i=1}^kC_1e^{-a\xi_i}e^{-tr^\mathrm{packed}_+(a)}\bigg|_{\xi_1=0}\\
        &\leq e^{-tr^\mathrm{packed}_+(a)}\sum_{k=1}^\infty\frac{k^{k/2}}{(k-1)!}\frac{C_1^k}{a^{k-1}} < \infty.
    \end{aligned}
\end{equation}
Collecting terms and inspecting the leading order results in
\begin{equation}\label{eq5.22}
    \frac{\D}{\D s}\left(\widehat{F}_t(s)\det(\Id-P_s \widehat{K}_t^\mathrm{packed} P_s)\right) = 1+\Or(\sqrt{t}e^{-tH(w_+)}).
\end{equation}
For the other term we first need uniform bounds on $\widehat{f}^*_s$, $\frac\D{\D s}\widehat{f}^*_s$ and $\widehat{g}_1$. 
We can estimate the contour integrals by applying steepest descent on the absolute values as has been done in \eqref{eq12}:
\begin{equation}\begin{aligned}
    \left|\frac{1}{2\pi\I}\int_{\gamma_-}\D w\,e^{tH(w)}\frac{e^{\xi(w+1)}}{w+1}\right| &\leq e^{\xi\Re(w+1)}\frac{1}{2\pi}\int_{\gamma_-}|\D w|\,\left|\frac{e^{tH(w)}}{w+1}\right|\leq C_2e^{-\xi a}e^{tH(w_-)}.
\end{aligned}\end{equation}
Similarly,
\begin{equation}\begin{aligned}
\left|\frac{1}{(2\pi\I)^2}\int_{\gamma_-}\D w\,\oint_{\gamma_+}\D z\,e^{{t} (H(w)-H(z))}\frac{e^{\xi(w+1)}}{(w-z)(1+z)}\right|&\leq C_3e^{-\xi a}e^{-tr^\mathrm{packed}}\\
\left|\frac{1}{(2\pi\I)^2}\int_{\gamma_-}\D w\,\oint_{\gamma_+}\D z\,e^{{t} (H(w)-H(z))}\frac{e^{\xi(w+1)}}{(w-z)(1+z)^2}\right|&\leq C_4e^{-\xi a}e^{-tr^\mathrm{packed}}\\
    \left|\frac{1}{2\pi\I}\oint_{\gamma_+}\D z\,e^{-tH(z)}\frac{e^{-\xi(z+1)}}{z+1}\right|&\leq C_5.
\end{aligned}\end{equation}
All bounds hold for $a>0$, $t>1$, $\xi>0$. Taken together, they imply existence of a constant $C_6$ such that the kernel defined by $\mathbf{K} = \widehat{K}_t^\mathrm{packed} + \widehat{f}^*_s\odot \widehat{g}_1$ and its derivative are bounded for $s=0$ as follows:
\begin{equation}\label{eqbfKbound}\begin{aligned}    
    \mathbf{K}(\xi_1,\xi_2) &\leq C_6e^{-\xi_1 a}e^{tH(w_-)},\\
    \frac\D{\D s}\mathbf{K}(\xi_1,\xi_2) &\leq C_6e^{-\xi_1 a}e^{tH(w_-)}.
\end{aligned}
\end{equation}
Now we can bound the derivative of the second summand of \eqref{eqF} similarly to \eqref{eq62}, but recognizing the extra term coming from $\mathbf{K}$ depending explicitly on $s$: 
\begin{equation}\label{eq5.26}\begin{aligned}
 \frac{\D}{\D s}\det(\Id-P_s \mathbf{K} P_s)=&\sum_{k=1}^\infty\frac{(-1)^{k-1}}{(k-1)!} \int_{(s,\infty)^{k-1}}\D \xi_2\dots\D \xi_k\det_{1\leq i,j\leq k}[\mathbf{K}(\xi_i,\xi_j)]\bigg|_{\xi_1=s}\\
 &+\sum_{k=1}^\infty\frac{(-1)^{k}}{(k-1)!} \int_{(s,\infty)^k}\D \xi_1\dots\D \xi_k\det_{1\leq i,j\leq k}[\mathbf{K}^{(?)}(\xi_i,\xi_j)],
\end{aligned}\end{equation}
where, abusing notation, $\mathbf{K}^{(?)}$ is understood as representing $\mathbf{K}$ \emph{or} $\frac\D{\D s}\mathbf{K}$. With both obeying the same bound \eqref{eqbfKbound}, we arrive at
\begin{equation}
    |\eqref{eq5.26}|\Big|_{s=0}\leq e^{tH(w_-)}\sum_{k=1}^\infty\frac{k^{k/2}C_6^k}{(k-1)!}\left(a^{-k+1}+a^{-k}\right) < \infty.
\end{equation}
Since $H(w_+)<-H(w_-)$, this contribution from the second summand decays in fact faster than the one coming from the first summand \eqref{eq5.22}. 
To see this inequality, first notice that
\begin{equation}
 \frac{\D}{\D a} H(w_\pm)=\frac{\partial H}{\partial a}(w_\pm)+H'(w_\pm)\frac{\partial w_\pm}{\partial a}=\frac{\partial H}{\partial a}(w_\pm)=w_\pm+1,
\end{equation}
so we have
\begin{equation}
 \begin{aligned}
  H(w_+)+H(w_-)\Big|_{a=0}&=0\\
  \frac{\D}{\D a}\big(H(w_+)+H(w_-)\big)&=2+w_++w_-=-a<0.
 \end{aligned}
\end{equation}
This concludes the proof, with the rate function being calculated as
\begin{equation}\begin{aligned}
 r^\mathrm{stat}(a)&=\lim_{t\to\infty}t^{-1}\log\Pb\bigg(x_t(t)\geq 2t+at\bigg)
 =H(w_+)=\max_{x\in(-1,0)}H(w)\\
 &=-\frac{a^2}{4}+\left(1+\frac{a}{2}\right)\sqrt{a+\frac{a^2}{4}}+\log\left(1+\frac{a}{2}-\sqrt{a+\frac{a^2}{4}}\right).
\end{aligned}\end{equation}
\end{proof}


\bibliographystyle{alpha}

\begin{thebibliography}{FSW15b}

\bibitem[DZ22]{DZ22}
S.~Das and W.~Zhu.
\newblock {Upper-tail large deviation principle for the ASEP}.
\newblock {\em Electronic Journal of Probability}, 27:1 -- 34, 2022.

\bibitem[FSW15a]{FSW15}
P.L. Ferrari, H.~Spohn, and T.~Weiss.
\newblock Brownian motions with one-sided collisions: the stationary case.
\newblock {\em Electronic Journal of Probability}, 20(69), 2015.

\bibitem[FSW15b]{FSW13}
P.L. Ferrari, H.~Spohn, and T.~Weiss.
\newblock Scaling limit for brownian motions with one-sided collisions.
\newblock {\em Annals of Applied Probability}, 25:1349--1382, 2015.

\bibitem[Jan15]{Jan15}
C.~Janjigian.
\newblock Large deviations of the free energy in the o'connell--yor polymer.
\newblock {\em Journal of Statistical Physics}, 160(4):1054--1080, 2015.

\bibitem[Jan19]{Jan19}
C.~Janjigian.
\newblock {Upper tail large deviations in Brownian directed percolation}.
\newblock {\em Electronic Communications in Probability}, 24:1 -- 10, 2019.

\bibitem[Joh00]{Jo00b}
K.~Johansson.
\newblock Shape fluctuations and random matrices.
\newblock {\em Communications in Mathematical Physics}, 209:437--476, 2000.

\bibitem[{Joh}17]{Joh17}
K.~{Johansson}.
\newblock {Two time distribution in Brownian directed percolation}.
\newblock {\em Communications in Mathematical Physics}, 351:441--492, 2017.

\bibitem[KPS16]{KPS12}
I.~Karatzas, S.~Pal, and M.~Shkolnikov.
\newblock {Systems of Brownian particles with asymmetric collisions}.
\newblock {\em Annales de l'Institut Henri Poincaré, Probabilités et Statistiques}, 52(1):323 -- 354, 2016.

\bibitem[MS14]{MS14}
S.~Majumdar and G.~Schehr.
\newblock Top eigenvalue of a random matrix: large deviations and third order phase transition.
\newblock {\em Journal of Statistical Mechanics: Theory and Experiment}, 2014(1):P01012, 2014.

\bibitem[PS02]{PS02}
M.~Pr{\"a}hofer and H.~Spohn.
\newblock Scale invariance of the {PNG} droplet and the {A}iry process.
\newblock {\em Journal of Statistical Physics}, 108:1071--1106, 2002.

\bibitem[QT25]{QT21}
J.~Quastel and L.~Tsai.
\newblock Hydrodynamic large deviations of {TASEP}.
\newblock {\em Communications on Pure and Applied Mathematics}, 2025.

\bibitem[WFS17]{WFS17}
T.~Weiss, P.L Ferrari, and H.~Spohn.
\newblock {\em Reflected Brownian Motions in the KPZ Universality Class}.
\newblock SpringerBriefs in Mathematical Physics. Springer Cham, 2017.

\end{thebibliography}


\end{document}